\theoremstyle{plain}
\newtheorem{theorem}{Theorem}
\newtheorem{lemma}[theorem]{Lemma}
\theoremstyle{definition}
\newtheorem{definition}[theorem]{Definition}
\theoremstyle{remark}
\title{Enumerating Acyclic Digraphs by Descents}
\author{Kassie Archer\thanks{Department of Mathematics, University of Texas at Tyler, karcher@uttyler.edu} \, and Christina Graves\thanks{Department of Mathematics, University of Texas at Tyler, cgraves@uttyler.edu}}
\date{}
\begin{document}

\maketitle

\begin{abstract}
A descent of a labeled acyclic digraph is a directed edge $x\to y$ with $x>y$. In this paper, we find a recurrence for the number of labeled acyclic digraphs with a given number of descents. 

\bigskip\noindent \textbf{Keywords:} acyclic digraphs; descents; recurrence
\end{abstract}

\section{Introduction}

\subsection{Background}

An acyclic digraph is a directed graph that has no cycles.  It is well-known that the number of acyclic digraphs on $n$ vertices with labels in $\{1,2,\ldots, n\}$ is given by the following recurrence: 
$$a_n = \sum_{k=1}^n  (-1)^{k+1} {{n}\choose{k}} 2^{k(n-k)}a_{n-k}.$$
The first 8 numbers in the sequence can be found in the last row of the table in Figure \ref{chart}.
This result is attributed to Robinson \cite{R73, R77} and was recovered in the same year by Stanley \cite{S73} who found the following equivalent enumeration for the number of acyclic digraphs: 
$$ \sum_{n=0}^\infty \frac{a_n}{n!2^{\binom{n}{2}}}x^n = \left( \sum_{n=0}^\infty  \frac{ (-1)^n}{n!2^{\binom{n}{2}}}x^n\right)^{-1}.$$

The enumeration of these graphs has been refined with respect to many statistics. These include the number of edges \cite{R92}, the number of sources \cite{L75}, the number of initially connected components \cite{G95}, and the joint distributions of edges, sources, and sinks \cite{G96}. In this paper, we define descents for acyclic digraphs and enumerate these graphs with respect to this newly-defined statistic.

\subsection{Definitions and notation}

Given a labeled acyclic digraph with vertex set $\{1,2,\ldots, n\}$, we let $x \to y$ denote the directed edge from $x$ to $y$. If $x \to y$ is an edge and $x > y$, we call this edge a \emph{descent} while if $x < y$, the edge is an \emph{ascent} or an \emph{increasing} edge. For a vertex $x$ in an acyclic digraph, if there are no edges of the form $y \to x$ for all vertices $y$, then we say vertex $x$ is a \emph{source}.  If there is a directed path from $x$ to $y$, we say the vertex $y$ is \emph{reachable} from $x$. Every vertex is considered to be reachable from itself.

We also make use of some standard notation as follows.  The set $\{1,2,\ldots n\}$ is denoted by $[n]$. Also, we let $\mathbb{Z}_{+}$ denote the set of non-negative integers.  The Gaussian binomial coefficients are used throughout this paper as well.  For $n,j,i \in \mathbb{Z}_+$, the necessary formulas and notation are:
\begin{itemize}
\item $[n]_q! = \frac{1-q^n}{1-q}$;
\item $\displaystyle{{n \choose j}_q = \begin{cases} \frac{[n]_q!}{[j]_q![n-j]_q!} & \text{ if } j \leq n\\ 0 & \text{otherwise;} \end{cases}}$ and
\item $Q_{n,j,i}$ is the coefficient of $q^i$ in ${n \choose j}_q$.
\end{itemize}

It is well-known that $Q_{n,j,i}$ can also be interpreted as the number of partitions of $i$ into $n-j$ parts each less than or equal to $j$. (See for example, \cite{StanleyBook}.)


\begin{figure}
\begin{tabular}{c||r|r|r|r|r|r|r|r}
\diagbox{$k$}{$n$} & 1 & 2 & \multicolumn{1}{c|}{3} & \multicolumn{1}{c|}{4} & \multicolumn{1}{c|}{5} & \multicolumn{1}{c|}{6} & \multicolumn{1}{c|}{7} & \multicolumn{1}{c}{8} \\ \hline \hline
0 & 1 & 2 & 8 & 64 & 1,024 & 32,768 & 2,097,152 & 268,435,456\\
1 & 0 & 1 & 11 & 161 & 3,927 & 172,665 & 14,208,231 & 2,234,357,849\\
2 & 0 & 0 & 5 & 167 & 6,698 & 419,364& 45,263,175 & 8,854,386,165\\
3 & 0 & 0 & 1 & 102 & 7,185 & 656,733& 94,040,848 & 23,016,738,169\\
4 & 0 & 0 & 0 & 39 & 5,477 & 757,939&145,990,526 & 44,953,824,619\\
5 & 0 & 0 & 0 & 9 & 3,107 & 686,425& 181,444,276 & 70,876,002,424\\
6 & 0 & 0 & 0 & 1 & 1,329 & 504,084& 187,742,937 & 94,103,501,133\\
7 & 0 & 0 & 0 & 0 & 423 & 305,207& 165,596,535 & 108,068,923,630\\
8 & 0 & 0 & 0 & 0 & 96 & 153,333& 126,344,492 & 109,265,863,921 \\
9 & 0 & 0 & 0 & 0 & 14 & 63,789& 84,115,442 & 98,446,816,132\\
10 & 0 & 0 & 0 & 0 & 1 & 21,752& 49,085,984 & 79,697,456,418\\
11 & 0 & 0 & 0 & 0 & 0 & 5,959& 25,134,230 & 58,293,422,939\\
12 & 0 & 0 & 0 & 0 & 0 &1,267& 11,270,307 & 38,657,195,560\\
 13 & 0 & 0 & 0 & 0 & 0 & 197 & 4,403,313 & 23,283,565,343\\
 14 & 0 & 0 & 0 & 0 & 0 & 20& 1,486,423 & 12,741,518,134\\
 15 & 0 & 0 & 0 & 0 & 0 & 1& 428,139 & 6,328,700,820\\
 16 & 0 & 0 & 0 & 0 & 0 & 0 & 103,345 & 2,846,683,820\\
 17 & 0 & 0 & 0 & 0 & 0 & 0 & 20,369 & 1,155,387,912 \\
 18 & 0 & 0 & 0 & 0 & 0 & 0 & 3,153 & 421,001,237\\
 19 & 0 & 0 & 0 & 0 & 0 & 0 & 360 & 136,799,627\\
 20 & 0 & 0 & 0 & 0 & 0 & 0 & 27 & 39,294,726\\
 21 & 0 & 0 & 0 & 0 & 0 & 0 & 1 & 9,865,371\\ 
 22 &0 & 0 & 0 & 0 & 0 & 0 & 0 & 2,133,019\\
  23 &0 & 0 & 0 & 0 & 0 & 0 & 0 & 389,396\\
  24 &0 & 0 & 0 & 0 & 0 & 0 & 0 & 58,400\\
  25 &0 & 0 & 0 & 0 & 0 & 0 & 0 & 6,913\\
  26 &0 & 0 & 0 & 0 & 0 & 0 & 0 & 606\\
   27 &0 & 0 & 0 & 0 & 0 & 0 & 0 & 35\\
    28 &0 & 0 & 0 & 0 & 0 & 0 & 0 & 1\\
 \hline
 
 TOTAL & 1 & 3 & 25 & 543 & 29,281 & 3,781,503 & 1,138,779,265 & 783,702,329,343
\end{tabular}
\caption{Values of $d_{n,k}$, the number of acyclic digraphs on $n$ vertices with $k$ descents, for $n \leq 8$.  The total is the number of labeled acyclic digraphs on $n$ vertices.}
\label{chart}
\end{figure}

\section{Main Result}

Our main result gives a recursive formula for the number of acyclic digraphs on $n$ vertices with exactly $k$ descents.  For the remainder of the paper, let $\mathcal{D}_{n,k}$ denote the set of acyclic digraphs on $n$ vertices with $k$ descents.  In order to state our main result, we make use of the following definition.

\begin{definition} Assume $n \geq 1, k \geq 0$, and $m \geq 2$ are integers.

\begin{itemize}
\item Let $a_{n,k,m}$ denote the number of graphs in $\mathcal{D}_{n,k}$ where one of the descents is $m \to 1$.
\item Let $b_{n,k,m}$ denote the number of graphs in $\mathcal{D}_{n,k}$ where $m$ is reachable from 1.
\item Let $c_{n,k,m}$ denote the number of graphs in $\mathcal{D}_{n,k}$ where exactly $m$ of the descents point to 1.
\item Let $d_{n,k}$ denote the number of graphs in $\mathcal{D}_{n,k}$.
\end{itemize}
\end{definition}
Our main result is the value of $d_{n,k}$ stated here in terms of $a_{n,k,m}, b_{n,k,m}$ and $c_{n,k,m}$, which are addressed in Lemmas \ref{lem:a}, \ref{lem:b}, and \ref{lem:c}, respectively. 
The values of $d_{n,k}$ for $n \leq 8$ can be found in Figure \ref{chart}.

\begin{theorem}
The number of acyclic digraphs on $n$ vertices with $k$ descents, denoted $d_{n,k}$, is given by the recurrence
\[ d_{n,k} = 2^{n-1}d_{n-1, k} + (n-1)d_{n,k-1} -  \sum_{m=2}^{n} (a_{n,k-1, m} + b_{n, k-1,m}) - \sum_{m=2}^k (m-1)c_{n, k, m} \]
with initial conditions
\[ d_{n,0} = 2^{n \choose 2} \quad \text{and } \quad d_{0, k} = \begin{cases} 1 & \text{ for } k=0\\ 0 & \text{ for }k \geq 1. \end{cases}  \]
\end{theorem}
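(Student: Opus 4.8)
The plan is to fix integers $n \geq 1$ and $k \geq 1$ and split $\mathcal{D}_{n,k}$ according to whether vertex $1$ is a source, the two resulting blocks accounting for the first term and for the remaining terms of the recurrence. The initial conditions are immediate: a digraph with $k=0$ has only increasing edges, and each of the $\binom{n}{2}$ possible increasing edges is independently present or absent, so $d_{n,0} = 2^{\binom{n}{2}}$; while the only digraph on $0$ vertices is edgeless, giving $d_{0,0}=1$ and $d_{0,k}=0$ for $k\geq1$.

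For the block in which $1$ is a source, the claim is that these digraphs are in bijection with pairs consisting of a digraph in $\mathcal{D}_{n-1,k}$ and a subset of $\{2,\dots,n\}$. Indeed, deleting a source preserves acyclicity, and since every edge incident to a source vertex $1$ is increasing, deleting $1$ preserves the descent count; relabeling $\{2,\dots,n\}$ by $v\mapsto v-1$ then yields an element of $\mathcal{D}_{n-1,k}$, and the out-neighborhood of $1$ is recorded separately. Conversely, attaching a new source $1$ with any chosen out-neighborhood inside $\{2,\dots,n\}$ adds no cycle and no descent. Hence this block has size $2^{n-1}d_{n-1,k}$.

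For the block in which $1$ is \emph{not} a source, I would count in two ways the set $P$ of pairs $(G,m)$ with $G\in\mathcal{D}_{n,k}$ and $m\to1$ an edge of $G$. Grouping by $G$ gives $|P|=\sum_{m\geq1}m\,c_{n,k,m}$. On the other hand, deleting the edge $m\to1$ is a bijection from $P$ onto the set of pairs $(G',m)$ with $G'\in\mathcal{D}_{n,k-1}$ and $m\in\{2,\dots,n\}$ such that $m\to1\notin E(G')$ and $m$ is not reachable from $1$ in $G'$; the inverse re-adds the edge $m\to1$, which keeps the graph acyclic exactly when $1$ cannot reach $m$. The number of all pairs $(G',m)$ with $G'\in\mathcal{D}_{n,k-1}$ and $m\in\{2,\dots,n\}$ is $(n-1)d_{n,k-1}$, and from it we must remove the pairs with $m\to1\in E(G')$, counted by $\sum_{m=2}^n a_{n,k-1,m}$, and those with $m$ reachable from $1$ in $G'$, counted by $\sum_{m=2}^n b_{n,k-1,m}$. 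These two forbidden families are disjoint, because an edge $m\to1$ together with a path from $1$ to $m$ would be a cycle in the acyclic graph $G'$. Combining, $\sum_{m\geq1}m\,c_{n,k,m}=(n-1)d_{n,k-1}-\sum_{m=2}^n\bigl(a_{n,k-1,m}+b_{n,k-1,m}\bigr)$.

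To finish, note that the size of the second block is $\sum_{m\geq1}c_{n,k,m}=\sum_{m\geq1}m\,c_{n,k,m}-\sum_{m\geq2}(m-1)c_{n,k,m}$, and that $c_{n,k,m}=0$ whenever $m>k$, so the last sum may be truncated at $m=k$; adding the two block sizes then yields the stated formula. I expect the main obstacle to be the careful verification of the bijection in the third paragraph: confirming that edge deletion lands precisely in the described set of pairs, that edge re-addition is well defined (that is, acyclicity holds) exactly under the stated reachability condition, and that the two excluded families are disjoint and together exhaust the complement. This is also where the correction term $\sum(m-1)c_{n,k,m}$ is forced, since a digraph with several descents into $1$ is counted once for each of them inside $(n-1)d_{n,k-1}$.
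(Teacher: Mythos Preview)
Your proposal is correct and follows essentially the same approach as the paper: split $\mathcal{D}_{n,k}$ by whether vertex $1$ is a source, count the source block directly as $2^{n-1}d_{n-1,k}$, and handle the non-source block by deleting/adding a single descent $m\to1$ with an overcounting correction via the $c_{n,k,m}$. Your double-counting formulation with pairs $(G,m)$ is in fact slightly tighter than the paper's, since you explicitly verify that the two excluded families (those with $m\to1$ already present and those with $m$ reachable from $1$) are disjoint---a point the paper uses silently when subtracting both $a_{n,k-1,m}$ and $b_{n,k-1,m}$.
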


\begin{proof}
Consider the base case where $k=0$. The number of acyclic digraphs with 0 descents is found by including any increasing edge. There are a total of ${n \choose 2}$ increasing edges, so the result holds.  For the remainder of the proof, assume $k \geq 1$.

We first note that any acyclic digraph with $k$ descents either has a descent of the form $x\to1$ or it does not.  If it does not have a descent of the form $x\to 1$, then vertex 1 is a source. The number of acyclic digraphs with $k$ descents where 1 is a source is counted recursively by taking any acyclic digraph with $k$ descents on $n-1$ vertices labeled $\{2, 3, \ldots, n\}$ and adding the vertex labeled 1.  Since 1 is smaller than the labels of all the other vertices, we can add any increasing edge of the form $1\to y$ for any $y \in\{2,3,\ldots, n\}$ without creating a descent. 
Thus, there are a total of 
\[ 2^{n-1}d_{n-1, k} \]
acyclic digraphs on $n$ vertices with $k$ descents where the vertex labeled 1 is a source.

The remainder of the proof counts the number of acyclic digraphs on $n$ vertices with $k$ descents where 1 is not a source.  Consider the set $\mathcal{D}_{n,k-1}$ of acyclic digraphs on $n$ vertices with only $k-1$ descents.  For each graph in $\mathcal{D}_{n,k-1}$ and for each $m$ between 2 and $n$, we want to add the descent $m \to 1$ 
to create a acyclic digraph on $n$ vertices with $k$ descents. However, this new graph with the added descent is only in $\mathcal{D}_{n,k}$ if:
\begin{itemize}
\item the graph did not already have the descent $m \to 1$ and
\item the graph did not have a path from $1$ to $m$.
\end{itemize}
The second condition is necessary to ensure that our new graph remains acyclic.  Thus, we can add the descent $m \to 1$ to a total of
\[ d_{n, k-1} - a_{n, k-1, m} - b_{n,k-1, m} \]
graphs.  Summing over all possible $m$ between 2 and $n$ yields a total of
\[ (n-1)d_{n,k-1} -  \sum_{m=2}^{n} (a_{n,k-1, m} + b_{n, k-1,m}). \]
graphs formed in this manner. 

However, counting the desired graphs in such a way counts some graphs more than once, in particular those with more than one descent of the form $x\to1$.  In fact, for any $\ell$ between 2 and $k$, graphs with exactly $\ell$ descents pointing at 1 are counted exactly $\ell$ times.  Subtracting the number of graphs that were counted multiple times yields the desired result.
\end{proof}

\section{Enumeration Lemmas}

The remainder of this paper is then devoted to finding formulas for
\[ \sum_{m=2}^n a_{n,k,m}, \sum_{m=2}^n b_{n,k,m}, \text{ and } \sum_{m=2}^k (m-1)c_{n, k, m}.\]

To this end, we define two more values.

\begin{definition}\label{tu} Assume $n \geq 1$ and $k \geq 0$ are integers.
\begin{itemize}
\item Let $t_{n,k}$ denote the number of graphs in $\mathcal{D}_{n,k}$ where every vertex is reachable from 1.
\item Let $u_{n,k}$ denote the number of  graphs in $\mathcal{D}_{n,k}$ where every vertex is reachable from $n$.
\end{itemize}
\end{definition}

In order to find formulas for $t_{n,k}$ and $u_{n,k}$ we state a brief lemma which will be used later.

\begin{lemma}\label{q}
There are $Q_{n,j,i}$ ways to partition $[n]$ into two sets $X$ and $Y$ where $|X| = j$ and with
$i$ pairs $(x,y) \in X \times Y$ with $x < y$.
\end{lemma}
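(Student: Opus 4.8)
The plan is to exploit the combinatorial description of $Q_{n,j,i}$ already recalled just above the statement: $Q_{n,j,i}$ equals the number of partitions of $i$ into $n-j$ parts, each at most $j$ (zero parts allowed), i.e.\ the number of Young diagrams of size $i$ fitting inside an $(n-j)\times j$ rectangle. So it suffices to exhibit a bijection between the $j$-element subsets $X\subseteq[n]$ and the partitions confined to that rectangle, under which the quantity $\#\{(x,y)\in X\times Y : x<y\}$ (where $Y:=[n]\setminus X$) corresponds to the size of the partition.

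First I would fix $Y=\{y_1<y_2<\cdots<y_{n-j}\}=[n]\setminus X$ and count, for each $r$, how many elements of $X$ lie below $y_r$: among the $y_r-1$ elements of $[n]$ smaller than $y_r$, exactly $r-1$ belong to $Y$ (namely $y_1,\dots,y_{r-1}$), so $y_r-r$ of them belong to $X$. Hence $\#\{(x,y)\in X\times Y:x<y\}=\sum_{r=1}^{n-j}(y_r-r)$. Setting $\mu_r:=y_r-r$, the strict increase $y_r<y_{r+1}$ forces $0\le\mu_1\le\mu_2\le\cdots\le\mu_{n-j}$, while $\mu_{n-j}=y_{n-j}-(n-j)\le n-(n-j)=j$; thus $(\mu_{n-j},\dots,\mu_1)$ is a partition of $\sum_r\mu_r$ into at most $n-j$ parts, each at most $j$. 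Conversely any such partition recovers $y_r=\mu_r+r$, hence $Y$ and $X$, so $X\mapsto(\mu_r)$ is a bijection. Since the number of relevant pairs equals $\sum_r\mu_r$, the number of $j$-subsets $X$ producing exactly $i$ such pairs equals the number of partitions of $i$ fitting inside the $(n-j)\times j$ box, which is $Q_{n,j,i}$.

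The only genuine content is the bookkeeping in the middle paragraph: checking that $y_r\mapsto y_r-r$ lands in, and surjects onto, the set of partitions confined to the $(n-j)\times j$ rectangle, i.e.\ that weak monotonicity together with the bounds $0\le\mu_r\le j$ hold and are the sole constraints. I expect this to be the main (and only modest) obstacle. As an alternative one could bypass the explicit bijection and induct on $n$ via the $q$-Pascal identity $\binom{n}{j}_q=\binom{n-1}{j-1}_q+q^{\,j}\binom{n-1}{j}_q$: conditioning on whether $n\in X$ or $n\in Y$ reproduces the two terms, since putting $n$ in $Y$ creates exactly $j$ new pairs $(x,n)$ with $x\in X$, while putting $n$ in $X$ creates none.
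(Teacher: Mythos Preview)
Your argument is correct and matches the paper's own proof: both list $Y=\{y_1<\cdots<y_{n-j}\}$, observe that $y_r-r$ counts the elements of $X$ below $y_r$ so that $i=\sum_r(y_r-r)$, and identify the tuple $(y_r-r)_r$ with a partition of $i$ into $n-j$ parts each at most $j$. Your write-up is in fact slightly more careful than the paper's, since you explicitly verify monotonicity and invertibility of the map; the inductive alternative via the $q$-Pascal rule is a nice extra that the paper does not mention.
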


\begin{proof}
Consider a partition of $[n]$ into two sets $X$ and $Y$ where $|X| = j$ and with
$i$ pairs $(x,y) \in X \times Y$ with $x < y$.  Write $Y = \{y_1, y_2, \ldots, y_{n-j}\}$ where $y_{r} < y_{r+1}$ for $r \in [n-j-1]$.  Notice that $0 \leq y_r - r \leq j$ for all $r \in [n-j]$.  Also, the number of pairs $(x,y) \in X \times Y$ with $y < x$ is 
\[ i = (y_1 - 1) + (y_2 - 2) + \cdots + (y_{n-j} - n+j).\]
Because each $y_r - r$ is between 0 and $j$, this directly corresponds to a partition of $i$ into $n-j$ parts each less than or equal to $j$. 

\end{proof}

The formulas for $t_{n,k}$ and $u_{n,k}$ are described in the following lemma.

\begin{lemma}\label{lem:tu} Let $t_{n,k}$ and $u_{n,k}$ be as defined in Definition \ref{tu}. Then $t_{n,k}$ and $u_{n,k}$ satisfy the following recurrences:
\[  t_{n,k} = \sum_{(j,i,r,s) \in \Omega_t}  u_{j,s}t_{n-j,r} {i \choose k-s-r} 2^{(j-1)(n-j)-i}(2^{n-j}-1) Q_{n-2,j-1,i} \]
and 
\[ u_{n,k} =   \sum_{(j,i,r,s) \in \Omega_u} t_{j,s}u_{n-j,r} \left({i \choose k-r-s}  - {i-n+j \choose k-s-r}\right) 2^{j(n-j)-i}Q_{n-2,j-1,i-n+1} \]
where
\[ \Omega_t = \{(j,i,r,s) \in \mathbb{Z}_{+} ^4 | 1 \leq j \leq n-1, \ i \leq (j-1)(n-j-1),\  r +s \leq k\} \] and
\[ \Omega_u = \{(j,i,r,s) \in \mathbb{Z}_{+}^4 | 1 \leq j \leq n-1, \  n-1 \leq i \leq j(n-j), \ r+ s \leq k-1\},\]
with initial conditions
\[ t_{n,0} = \begin{cases}1 & \text{ for } n=0\\ [n-1]_2! & \text{ for } n \geq 1, \end{cases} \quad t_{1,k} = \begin{cases} 1 & \text{ for } k = 0\\ 0 & \text{ for } k \geq 1, \end{cases} \]
and
\[ u_{n,0} = \begin{cases} 1 & \text{ for } n=1\\ 0 & \text{ for } n \neq 1. \end{cases} \]

\end{lemma}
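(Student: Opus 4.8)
The plan is to prove the two recurrences by conditioning on the structure of the "root" vertex ($1$ in the $t$ case, $n$ in the $u$ case) and its relationship to the rest of the graph, then applying Lemma \ref{q} to count the increasing edges between two blocks of vertices that generate a prescribed number of descents. Consider first $t_{n,k}$: a graph counted by $t_{n,k}$ has every vertex reachable from $1$. Since $1$ is the smallest label, every edge out of $1$ is increasing. I would single out the set $S$ of vertices reachable from $1$ \emph{without using vertex~$1$ as an intermediate after leaving it} — more precisely, partition $[n]\setminus\{1\}$ according to membership in the strongly-reachable core, but the cleaner decomposition (and the one matching the summand) is: let $j$ be the number of vertices $v$ such that $1$ is reachable-from in the induced subgraph... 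I'd actually organize it as follows. Remove vertex $1$; the remaining graph on $n-1$ vertices, together with the increasing edges from $1$, must have the property that $1$ reaches everything. Decompose $[n]\setminus\{1\}$ into the block $X$ (of size $j-1$, say, reindexed so the root-block has size $j$ with the root) that forms one reachable component and its complement $Y$. The factor $u_{j,s}$ counts the "top" piece where every vertex is reachable from its maximum, $t_{n-j,r}$ counts the complementary piece, the Gaussian coefficient $Q_{n-2,j-1,i}$ counts (via Lemma \ref{q}) the ways to interleave the two label sets producing $i$ "backward" pairs, the factor $2^{(j-1)(n-j)-i}$ accounts for freely chosen increasing edges between the blocks that do not cross a descent, the factor $(2^{n-j}-1)$ forces at least one connecting edge so reachability propagates, and $\binom{i}{k-s-r}$ distributes the remaining $k-s-r$ descents among the $i$ available backward pairs. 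The bound $i \le (j-1)(n-j-1)$ and the ranges in $\Omega_t$ are exactly the feasibility constraints for these choices.

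For $u_{n,k}$ the argument is parallel but rooted at the \emph{largest} vertex $n$, so every edge into $n$ is a descent, which is why the descent budget shifts to $r+s \le k-1$ and why an extra inclusion–exclusion term $\binom{i}{k-r-s} - \binom{i-n+j}{k-s-r}$ appears: one must subtract the configurations in which none of the forced descents reaches $n$, i.e. those in which $n$ is not actually reachable from its block's maximum in the required way. The shift $i \mapsto i - n + 1$ in the Gaussian coefficient and the range $n-1 \le i \le j(n-j)$ reflect that here the relevant pairs are the increasing pairs $(x,y)\in X\times Y$ with $x<y$ rather than the backward ones, again via Lemma \ref{q} applied to the complementary statistic.

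The key steps, in order, are: (1) fix the decomposition of the vertex set induced by the reachability structure from the root, and verify it is a genuine bijection onto ordered pairs (reachable block with its own root-reachability property, complementary block) — this is where one must be careful that the decomposition is unique and that the "every vertex reachable from the root" condition localizes correctly to the two blocks; (2) for a fixed pair of label sets $X, Y$ of sizes $j$ and $n-j$, count the edges between the blocks, splitting them into the increasing ones (free, except at least one is forced) and the decreasing ones (of which exactly $k-s-r$ are present, chosen from the $i$ available slots); (3) sum over the $Q_{n-2,j-1,\bullet}$ choices of how the two label sets interleave, using Lemma \ref{q} to identify $i$ with the number of relevant cross-pairs; (4) check the initial conditions directly — $t_{n,0} = [n-1]_2!$ because a descent-free graph in which $1$ reaches everything is a "maximal-chain-like" object counted by a $q$-factorial at $q=2$, and $u_{n,0}$ is nonzero only when $n=1$ since $n$ being reachable from $n$ with no descents forces no edges into $n$, hence $n=1$.

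The main obstacle I anticipate is step (1): pinning down the \emph{correct} canonical decomposition so that the product $u_{j,s}\, t_{n-j,r}$ genuinely enumerates the two pieces with no double-counting and no omission — in particular, explaining why one block's internal condition is "reachable from its \emph{maximum}" (the $u$-type condition) while the other is "reachable from its \emph{minimum}" (the $t$-type condition), and why exactly one connecting edge direction is constrained. Getting the interplay between this decomposition and the descent bookkeeping right — so that the exponents $(j-1)(n-j)-i$ and $j(n-j)-i$, the forced-edge factors, and the binomial distributions of leftover descents all line up — is the delicate part; once the decomposition is correctly stated, steps (2)–(4) are routine applications of Lemma \ref{q} and elementary counting.
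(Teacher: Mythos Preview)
Your overall framework---split the vertex set into two blocks, count the internal structures by $t$ and $u$, use Lemma~\ref{q} for the interleaving of labels, and book-keep cross edges as freely chosen increasing edges versus a fixed number of descents---matches the paper. But the crucial step (your step~(1), which you correctly flag as the obstacle) is genuinely missing, and the decompositions you sketch are not the right ones. For $t_{n,k}$ the paper does \emph{not} remove vertex~$1$ or look at a ``reachable component'' rooted at~$1$; instead it takes $X$ to be the set of vertices reachable from the \emph{opposite} extreme vertex~$n$, and $Y=[n]\setminus X$. With this choice the two side conditions are automatic: every vertex of $X$ is reachable from $n=\max X$ by definition, giving the $u_{j,s}$ factor; and since no edge can leave $X$ (anything an $X$-vertex points to is again reachable from~$n$), every path from $1$ to a vertex of $Y$ stays in $Y$, so the induced graph on $Y$ has everything reachable from $1=\min Y$, giving $t_{n-j,r}$. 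The factor $2^{n-j}-1$ then encodes that at least one increasing edge runs from $Y$ into the single vertex~$n$, which is exactly what makes $n$ (and hence all of $X$) reachable from~$1$. Dually, for $u_{n,k}$ the paper conditions on the set $X$ of vertices reachable from~$1$, and the subtracted binomial $\binom{i-n+j}{k-s-r}$ removes the configurations with \emph{no descent pointing at vertex~$1$}---not ``no descent reaching~$n$'' as you wrote---because a direct edge $y\to 1$ from some $y\in Y$ is precisely what makes $1$ reachable from~$n$.

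Your attempted decompositions based on vertex~$1$ itself do not produce blocks with the required $u$-type and $t$-type internal structures, and would not yield the exponent $(j-1)(n-j)-i$ or the index $Q_{n-2,j-1,i}$; the formula really hinges on pivoting at the opposite extreme label. Once that decomposition is in hand, your steps~(2)--(4) go through essentially as you outline.
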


\begin{proof}
We first consider the formula for $t_{n,k}$.  For the base case where $k=0$, we need to find the number of graphs on $n$ vertices with 0 descents where every vertex is reachable from 1.  The only way the vertex labeled 2 is reachable from 1 is if the edge $1 \to 2$ is included.  Let $2 \leq m \leq n$ be another vertex.  If vertices $\{1,2,\ldots m-1\}$ are reachable from 1, then vertex $m$ is reachable from 1 if at least one edge of the form $m' \to m$ is included for some $1 \leq m' \leq m-1$; thus, there are $2^{m-1} - 1$ possible edges that can point to $m$.  Multiplying over all $m$ between 2 and $n$ yields a total of
\[ (2^1 - 1)(2^2 - 1) \cdots (2^{n-1} - 1) = [n-1]_2! \]
such graphs.  For the base case where $n=1$, it is clear that there is only one acyclic digraph and it has 0 descents.
 
For the remainder of the proof of the formula for $t_{n,k}$, assume that $n,k \geq 1$. The set of all graphs in $\mathcal{D}_{n,k}$ where every vertex is reachable from 1 can be partitioned based on the number of vertices, $j$, that are reachable from $n$.  For any such graph, let $X$ be the set of vertices that are reachable from $n$ and $Y = [n] \setminus X$.  Thus we want to count how many graphs there are in $\mathcal{D}_{n,k}$ where every vertex is reachable from 1 that also satisfy the following conditions:
\begin{itemize}
\item $|X| = j$,
\item the number of pairs of vertices $(x,y) \in X \times Y$ with $x < y$ is $i$,
\item the number of descents in the subgraph induced by $X$ is $s$, and
\item the number of descents in the subgraph induced by $Y$ is $r$,
\end{itemize}
where the values of $j, i, r,$ and $s$ satisfy certain conditions.  We first notice that  $n \in X$ since $n$ is reachable from itself.  Also, since $n$ is reachable from 1 and the desired graphs are acyclic, 1 cannot be reachable from $n$ and thus $1 \in Y$.  It follows that $1 \leq j \leq n-1$. Furthermore, the number of pairs of vertices $(x,y) \in X \times Y$ with $x < y$ is at most $(j-1)(n-j-1)$ which occurs when every vertex in $X \setminus \{n\}$ is smaller than every vertex in $Y \setminus \{1\}$. Finally, we note that $0 \leq r+s \leq k$. Thus, $(j,i,r,s) \in \Omega_t$ as defined in the statement of Lemma \ref{lem:tu}.

Since $n \in X$ and $1 \in Y$, we now consider the remaining vertices. The number of ways to partition the remaining $n-2$ vertices into two sets $X$ and $Y$ with $|X| = j$ and with $i$ pairs of vertices $(x,y) \in X \times Y$ with $x < y$ is $Q_{n-2, j-1, i}$ by Lemma \ref{q}.  There are $u_{j,s}$ choices for the subgraph induced by $X$ and $t_{n-j,r}$ choices for the subgraph induced by $Y$.  Also, because the graphs must have a total of $k$ descents, the remaining $k-s-r$ descents can be chosen from the $i$ pairs of vertices $(x,y) \in X \times Y$ where $x < y$.  Since every vertex is reachable from 1, vertex $n$ must be reachable from 1; thus there must be an edge from some vertex in $Y$ to $n$.  There are $n-j$ possible increasing edges from $Y$ to $n$, and at least one must be included yielding a total of $2^{n-j} - 1$ possibilities.  Finally, there are $(n-j)(j-1) - i$ possible increasing edges from $Y \setminus \{1\}$ to $X \setminus \{n\}$.  Because these edges can all be included or not, we multiply our total by $2^{(n-j)(j-1) - i}$. Note that we cannot include any edges from $X$ to $Y$ since all vertices reachable from the vertex labeled $n$ are already in $X$.

We use a similar technique to find a formula for $u_{n,k}$. The set of graphs in $\mathcal{D}_{n,k}$ where every vertex is reachable from $n$ can be partitioned based on the number of vertices, $j$, that are reachable from 1. For any such graph, let $X$ be the set of vertices that are reachable from $1$ and $Y = [n] \setminus X$. Again, we want to count the number of graphs that satisfy the aforementioned conditions along with the following:
\begin{itemize}
\item $|X| = j$
\item the number of pairs of vertices $(x,y) \in X \times Y$ with $x < y$ is $i$,
\item the number of descents in the subgraph induced by $X$ is $s$, and
\item the number of descents in the subgraph induced by $Y$ is $r$, 
\end{itemize}
where $j, i, r,$ and $s$ satisfy certain conditions.  Because every vertex is reachable from $n$, and the vertex labeled 1 is reachable from itself, we have that $1 \in X$ and $n \in Y$ and hence $1 \leq j \leq n-1$. The number of pairs of vertices $(x,y) \in X \times Y$ with $x <y$ is at least $n-1$ since there are $j$ such pairs of the form $(x,n)$ and $n-j$ pairs of the form $(1,y)$. The pair $(n,1)$ is counted twice in this argument and hence $i \geq n-1$. The largest number of pairs occurs when every element in $Y$ is greater than every element in $X$ and thus $i \leq j(n-j)$. Finally, it is clear that $r+s \leq k-1$, since there must be at least one descent of the form $y\to 1$ where $y\in Y$ and $1 \in X$. Thus, we see that $(j,i,r,s) \in \Omega_u$ as defined in Lemma \ref{lem:tu}.

Consider the number of ways to partition the remaining $n-2$ vertices into $X$ and $Y$ meeting the desired conditions.  We know that the number of pairs of vertices $(x,y) \in X \times Y$ with $x < y$ is $i$, but $j$ of these pairs are of the form $(x, n)$ and $n-j$ pairs are of the form $(1,y)$. So there are $i - (n-1)$ pairs of vertices in $(x,y) \in (X \setminus \{1\}) \times (Y \setminus \{n\})$ with $x < y$. Thus, the number of ways to partition the remaining $n-2$ vertices into sets $X$ and $Y$ meeting the desired conditions is $Q_{n-2,j-1, i-n+1}$.

The remainder of the terms in our recursive formula for $u_{n,k}$ can be seen in a very similar manner to that of $t_{n,k}$.  There are $t_{j,s}$ and $u_{n-j,r}$ choices for the subgraphs induced by $X$ and $Y$ respectively.  In order to get a total of $k$ descents, the remaining $k-r-s$ descents can be chosen from the $i$ pairs of vertices $(x,y) \in X\times Y$ with $x<y$. However, because every vertex is reachable from $n$, at least one of those $k-r-s$ descents must be of the form $y\to1$. There are $n-j$ pairs of the form $(1,y)$, and hence there are $i-n+j$ pairs that do not contain $1$. Thus, the term
\[ {i \choose k-r-s}  - {i-n+j \choose k-s-r} \]
counts the number of ways the $k-r-s$ descents can be chosen from the $i$ pairs of vertices while still including at least one descent pointing at 1. Finally, there are $j(n-j) - i$ possible increasing edges from $Y$ to $X$ which can all be included or not which multiplies our total by $2^{j(n-j) - i}$.

\end{proof}

We are now ready to state the formulas needed for our main result, namely \[ \sum_{m=2}^n a_{n,k,m}, \sum_{m=2}^n b_{n,k,m}, \text{ and } \sum_{m=2}^k (m-1)c_{n, k, m},\] which are found in the lemmas below.

\begin{lemma}\label{lem:a} For $2 \leq m\leq n$, let $a_{n,k,m}$ denote the number of graphs in $\mathcal{D}_{n,k}$ where one of the descents is $m \to 1$. Then
\[ \sum_{m=2}^k a_{n,k,m} = \sum_{(j,i,r,s,\ell) \in \Omega_a} \ell \cdot d_{n-j,r}t_{j,s}{n-j \choose \ell}{i \choose k-s-r-\ell} 2^{(j-1)(n-j) - i} Q_{n-1,j-1,i} \]
where 
\[ \Omega_a = \{(j,i,r,s,\ell) \in \mathbb{Z}_{+}^5 | 1 \leq j \leq n-1, \ i \leq (j-1)(n-j), \ r + s \leq k-1, \ 1 \leq \ell \leq k-r-s\} \]

\end{lemma}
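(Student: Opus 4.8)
The plan is to establish the identity by a counting argument that decomposes each graph according to the set of vertices reachable from $1$, thereby reducing the count to the quantities $t_{j,s}$ and $d_{n-j,r}$ from Lemma \ref{lem:tu}. First I would reinterpret the left-hand side: since $a_{n,k,m}$ is the number of graphs in $\mathcal{D}_{n,k}$ having $m\to 1$ as an edge, the sum $\sum_m a_{n,k,m}$ equals $\sum_{G\in\mathcal{D}_{n,k}}\#\{\text{descents of }G\text{ pointing at }1\}$; that is, it counts pairs $(G,e)$ where $G\in\mathcal{D}_{n,k}$ and $e$ is a distinguished descent edge into $1$. The factor $\ell$ in the claimed formula will record this multiplicity.

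Given such a pair $(G,e)$, let $X$ be the set of vertices reachable from $1$, let $Y=[n]\setminus X$, and put $j=|X|$. Since $G$ is acyclic and $e$ ends at $1$, its tail lies in $Y$, so $1\in X$ and $Y\neq\emptyset$, giving $1\le j\le n-1$. I would then record the structural facts that make the count factor: there are no edges from $X$ to $Y$, because the head of such an edge would be reachable from $1$; consequently the subgraph induced on $X$ is an acyclic digraph in which every vertex is reachable from $1$, and since $1=\min X$ the order-isomorphism $X\to[j]$ preserves descents and reachability from $1$, so this subgraph is counted by $t_{j,s}$, where $s$ is its number of descents; the subgraph induced on $Y$ is an arbitrary acyclic digraph, counted by $d_{n-j,r}$ with $r$ its number of descents (no constraint appears, as $1$ cannot reach $Y$); and every remaining edge of $G$ runs from $Y$ to $X$. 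Among those cross edges, the ones landing on $1$ are automatically descents; if there are $\ell\ge 1$ of them, they amount to choosing which $\ell$ vertices of $Y$ point at $1$, contributing $\binom{n-j}{\ell}$, and $e$ is one of these, contributing the leading factor $\ell$. For the cross edges from $Y$ to $X\setminus\{1\}$: applying Lemma \ref{q} to the set $[n]\setminus\{1\}$, the partition into $X\setminus\{1\}$ (size $j-1$) and $Y$ (size $n-j$) with $i$ pairs $(x,y)\in(X\setminus\{1\})\times Y$ satisfying $x<y$ occurs in $Q_{n-1,j-1,i}$ ways; on each of those $i$ pairs the edge $y\to x$ is a descent, while on the remaining $(j-1)(n-j)-i$ pairs it is an ascent. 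Insisting on exactly $k$ descents in total forces $s+r+\ell+(\text{number of descent-type cross edges included})=k$, so the descent-type cross edges are chosen in $\binom{i}{k-s-r-\ell}$ ways and the ascent-type ones are free, contributing $2^{(j-1)(n-j)-i}$. The product of these counts is precisely the summand, and the inequalities defining $\Omega_a$ are exactly $1\le j\le n-1$, $i\le(j-1)(n-j)$, $1\le\ell\le k-s-r$, and $s+r\le k-1$ (with vanishing binomials automatically enforcing $\ell\le n-j$ and $k-s-r-\ell\le i$).

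To finish I would argue the reverse direction: any choice of $(j,i,r,s,\ell)\in\Omega_a$ together with the listed data reassembles a unique digraph $G$ and a distinguished edge, and one must verify that $G$ is acyclic (a cycle would have to leave $X$, impossible since no edge goes from $X$ to $Y$ and the induced subgraphs on $X$ and $Y$ are acyclic), that the set of vertices reachable from $1$ in $G$ is again exactly $X$ (all of $X$ via the $t_{j,s}$-subgraph, and nothing in $Y$ since no edge leaves $X$), that $G$ has exactly $k$ descents, and that the marked edge is a descent pointing at $1$. The step I expect to demand the most care is the equivalence that underlies the factorization: that ``$X$ equals the reachable set of $1$'' is the same as the conjunction of ``the induced subgraph on $X$ has every vertex reachable from $1$'' and ``there are no $X\to Y$ edges'', together with the observation that, under this condition, the $Y\to X$ edges may be added completely freely — this is what allows the count to split into the clean product of $t_{j,s}$, $d_{n-j,r}$, a Gaussian-binomial partition factor, and powers of $2$. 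The rest is routine bookkeeping once Lemma \ref{q} is in hand.
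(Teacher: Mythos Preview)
Your proposal is correct and follows essentially the same approach as the paper: partition graphs by the set $X$ of vertices reachable from $1$, count the induced subgraphs on $X$ and $Y=[n]\setminus X$ by $t_{j,s}$ and $d_{n-j,r}$, enumerate the cross edges from $Y$ to $X$ according to whether they are ascents or descents using Lemma~\ref{q}, and interpret the factor $\ell$ as the multiplicity with which a graph having $\ell$ descents into $1$ contributes to $\sum_m a_{n,k,m}$. Your write-up is in fact somewhat more careful than the paper's, spelling out the reverse direction of the bijection and noting that the order-isomorphism $X\to[j]$ preserves descents and reachability from $1$.
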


\begin{proof}
We begin by partitioning the set of all graphs in $\mathcal{D}_{n,k}$ that have \emph{at least one} descent of the form $y\to 1$ by the number of vertices, $j$, that are reachable from 1.  For any such graph, let $X$ be the set of vertices that are reachable from 1 and let $Y = [n]\setminus X$.  We will proceed by counting the number of graphs satisfying the stated conditions along with the following:

\begin{itemize}
\item $|X| = j$,
\item the number of pairs of vertices $(x,y) \in (X \setminus \{1\}) \times Y$ where $x < y$ is $i$,
\item the number of descents in the subgraph induced by $X$ is $s$,
\item the number of descents in the subgraph induced by $Y$ is $r$, and
\item the number of descents pointing at 1 is $\ell$,
\end{itemize}
where $(j,i,r,s,\ell)$ satisfy certain conditions.  Notice that 1 is reachable from itself and thus $1 \in X$.  Also, $Y$ cannot be empty because there must be at least one descent pointing at 1. Thus, $1 \leq j \leq n-1$, $r+s \leq k-1$, and $1 \leq \ell \leq k-r-s$. Also, the maximum number of pairs of vertices $(x,y) \in (X \setminus \{1\}) \times Y$ where $x < y$ is $(n-j)(j-1)$ which occurs when every element in $Y$ is greater than every element of $X$.  Hence, $(j,i,r,s,\ell) \in \Omega_a$ as defined in the statement of the Lemma.

The number of ways to partition the remaining $n-1$ vertices into sets $X$ and $Y$ meeting the desired conditions is $Q_{n-1,j-1,i}$ by Lemma \ref{q}. There are $d_{n-j,r}$ and $t_{j,s}$ choices for the subgraphs induced by $Y$ and $X$, respectively, and there are ${n-j \choose \ell}$ ways to choose the $\ell$ descents pointing at 1. The remaining $k-s-r-\ell$ descents are can be chosen from the $i$ pairs of vertices $(x,y) \in (X \setminus \{1\}) \times Y$ where $x < y$. Finally, there are $(j-1)(n-j) - i$ pairs of vertices $(x,y) \in X \times Y$ where $x > y$; any of these increasing edges can be included. Thus, there are
\[ \sum_{(j,i,r,s,\ell) \in \Omega_a} d_{n-j,r}t_{j,s}{n-j \choose \ell}{i \choose k-s-r-\ell} 2^{(j-1)(n-j) - i} Q_{n-1,j-1,i} \]
graphs in $\mathcal{D}_{n,k}$ that that have \emph{at least one} descent pointing at 1. However, for each $1 \leq \ell \leq k-r-s$, we have contributed to the sum $\sum_{m=2}^k a_{n,k,m}$ exactly $\ell$ times.  Thus the equality stated in Lemma \ref{lem:a} holds.

\end{proof}

\begin{lemma}\label{lem:b}
For $2\leq m\leq n$, let $b_{n,k,m}$ denote the number of graphs in $\mathcal{D}_{n,k}$ where $m$ is reachable from 1. Then,
\[ \sum_{m=2}^k b_{n,k,m} = \sum_{(j,i,r,s) \in \Omega_b} (j-1)d_{n-j,r}t_{j,s}{i \choose k-r-s} 2^{j(n-j) - i} Q_{n-1,j-1,i-n+j} \]
where 
\[ \Omega_b = \{(j,i,r,s) \in \mathbb{Z}_{+}^4 | 2 \leq j \leq n,\  n-j \leq i \leq j(n-j), \ r + s \leq k\} \]

\end{lemma}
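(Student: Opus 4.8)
The plan is to follow the strategy of Lemma \ref{lem:a}. I interpret the sum $\sum_{m} b_{n,k,m}$ as counting pairs $(G,m)$ with $G \in \mathcal{D}_{n,k}$, $2 \le m \le n$, and $m$ reachable from $1$ in $G$; equivalently, if $j$ denotes the number of vertices reachable from $1$ in $G$, then $G$ contributes exactly $j-1$ to this sum, since $1$ is reachable from itself and every other reachable vertex has label at least $2$. So I will partition $\mathcal{D}_{n,k}$ according to the value of $j$, count the graphs in each part, and weight the $j$-th part by $j-1$; this is precisely the factor $(j-1)$ appearing in the formula.

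Fix $j$ and let $G \in \mathcal{D}_{n,k}$ have exactly $j$ vertices reachable from $1$. Set $X$ to be the set of vertices reachable from $1$ and $Y = [n] \setminus X$, so $[n] = X \sqcup Y$ with $1 \in X$ and $2 \le j \le n$ (the case $j=n$, i.e.\ $Y = \emptyset$, is allowed). The key structural facts are: (i) there is no edge from $X$ to $Y$, for otherwise its head would be reachable from $1$; (ii) conversely, since no edge leaves $X$, every vertex of $X$ is already reachable from $1$ using only vertices of $X$, so the subgraph induced by $X$ — with $1$ as its least vertex — is one of the $t_{j,s}$ graphs on $j$ vertices with $s$ descents in which every vertex is reachable from the least vertex; (iii) because there is no edge from $X$ to $Y$, any directed cycle of $G$ lies entirely in $X$ or entirely in $Y$, so $G$ is acyclic if and only if both induced subgraphs are, and the subgraph induced by $Y$ may be any of the $d_{n-j,r}$ acyclic digraphs with $r$ descents; and (iv) for a pair $(x,y) \in X \times Y$ the only permitted edge is $y \to x$, and it is a descent exactly when $x < y$. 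Write $i$ for the number of pairs $(x,y) \in X \times Y$ with $x < y$; since all $n-j$ pairs $(1,y)$ with $y \in Y$ satisfy $1 < y$, we have $i \ge n-j$, which is the lower bound in $\Omega_b$. There are then $\binom{i}{k-r-s}$ ways to choose which of the $i$ permitted ``descent'' cross-edges are present so that the total descent count equals $k$, and the remaining $j(n-j)-i$ cross-edges (all increasing) are each independently present or absent, contributing $2^{j(n-j)-i}$.

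It remains to count label assignments: exactly $n-j$ of the $i$ pairs $(x,y) \in X \times Y$ with $x < y$ have $x = 1$, so $i-(n-j)$ of them lie in $(X \setminus \{1\}) \times Y$, and by Lemma \ref{q} there are $Q_{n-1,j-1,i-n+j}$ ways to split the remaining $n-1$ labels into $X \setminus \{1\}$ (of size $j-1$) and $Y$ (of size $n-j$) realizing exactly this count. Multiplying the factors $d_{n-j,r}$, $t_{j,s}$, $\binom{i}{k-r-s}$, $2^{j(n-j)-i}$, and $Q_{n-1,j-1,i-n+j}$, summing over the admissible tuples $(j,i,r,s)$ — which is exactly the index set $\Omega_b$ — and weighting the $j$-block by $j-1$ yields the claimed identity. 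I expect the only delicate point to be establishing (i)--(iii): that conditioning on the reachability set $X$ genuinely decouples $G$ into an independent ``$t$-part'' on $X$, an independent ``$d$-part'' on $Y$, and a free choice of $Y \to X$ edges with no remaining interaction (in particular, no acyclicity constraint) among these three ingredients. Once this is in place, the rest is the same Gaussian-binomial bookkeeping already carried out in Lemmas \ref{lem:tu} and \ref{lem:a}.
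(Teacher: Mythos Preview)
Your proposal is correct and follows essentially the same approach as the paper: both partition $\mathcal{D}_{n,k}$ by the size $j$ of the set $X$ of vertices reachable from $1$, weight the $j$-block by $j-1$, and then count graphs in each block via the factors $t_{j,s}$, $d_{n-j,r}$, $\binom{i}{k-r-s}$, $2^{j(n-j)-i}$, and $Q_{n-1,j-1,i-n+j}$. If anything, your write-up is more careful than the paper's on the one point you flagged as delicate --- the paper does not explicitly verify that the three ingredients (the $t$-part on $X$, the $d$-part on $Y$, and the free choice of $Y\to X$ edges) are independent and that acyclicity imposes no further constraint, whereas your items (i)--(iii) supply exactly that justification.
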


\begin{proof}
We partition the graphs in $\mathcal{D}_{n,k}$ by the number of vertices, $j$ that are reachable from 1 where $2 \leq j \leq n$.  If we can count the graphs in $\mathcal{D}_{n,k}$ where $j$ vertices are reachable from 1, then multiplying by $(j-1)$ gives the number of graphs in $\mathcal{D}_{n,k}$ where $m$ is reachable from 1 and and there are exactly $j$ vertices reachable from 1 for all $2 \leq m \leq n$.  Summing over all $j$ will then give the desired result.

Toward this end, we again let $X$ be the set of vertices reachable from 1 and let $Y = [n] \setminus X$.  We count the number of graphs with the conditions that:
\begin{itemize}
\item $|X| = j$,
\item the number of pairs of vertices $(x,y) \in X \times Y$ where $x < y$ is $i$,
\item the number of descents in the subgraph induced by $X$ is $s$, and
\item the number of descents in the subgraph induced by $Y$ is $r$,
\end{itemize}
where $j,i,r$ and $s$ satisfy certain conditions.  In particular, we need $2 \leq j \leq n$ and $0 \leq r+s \leq k$. Also, because all $n-j$ elements in $Y$ are greater than 1, we see that $i \geq n-j$, and the maximum value of $i$ occurs when every element in $X$ is greater than every element in $Y$ which gives $i \leq j(n-j)$. Hence, $(j,i,r,s) \in \Omega_b$ as defined in the statement of Lemma \ref{lem:b}.

Consider the number of ways to partition the set of vertices $[n] \setminus \{1\}$ into $X$ and $Y$ meeting the desired conditions.  We know that the number of pairs of vertices $(x,y) \in X \times Y$ with $x < y$ is $i$, but $n-j$ pairs are of the form $(1,y)$. So there are $i - (n-1)$ pairs of vertices in $(x,y) \in (X \setminus \{1\}) \times Y$ with $x < y$. Thus, the number of ways to partition the remaining $n-1$ vertices into sets $X$ and $Y$ meeting the desired conditions is $Q_{n-1,j-1, i-n+j}$ by Lemma \ref{q}.  It is clear that the number of choices for the subgraph induced by $X$ is $t_{j,s}$, and that the number of choices for the subgraph induced by $Y$ is $d_{n-j,r}$. The remaining $k$ descents can be chosen from the $i$ pairs of vertices $(x,y) \in X \times Y$ where $x < y$, and the increasing edges from $Y$ to $X$, of which there are $j(n-j) - i$, can be included or not.  The result follows.
\end{proof}

\begin{lemma}\label{lem:c} For $2\leq m\leq n$, let $c_{n,k,m}$ denote the number of graphs in $\mathcal{D}_{n,k}$ where exactly $m$ of the descents are of the form $x\to1$.  Then
\[ \sum_{m=2}^k (m-1)c_{n,k,m} \]
is equivalent to
\[  \sum_{(m,j,i,r,s) \in \Omega_c}  (m-1)d_{n-j,r}t_{j,s}{n-j \choose m}{i \choose k-r-s-m} 2^{(j-1)(n-j) - i} Q_{n-1,j-1,i} \]
where 
\[ \Omega_c = \{(m,j,i,r,s) \in \mathbb{Z}_{+}^5 | 2 \leq m \leq k,\  1 \leq j \leq n-m, \ i \leq (j-1)(n-j), \ r + s \leq k-m\}. \]
\end{lemma}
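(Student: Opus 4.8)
The plan is to reuse the argument in the proof of Lemma~\ref{lem:a} almost verbatim, with the weight $\ell$ there replaced by $m-1$. First I would fix an integer $m$ with $2\le m\le k$ and count directly the graphs in $\mathcal{D}_{n,k}$ having \emph{exactly} $m$ descents of the form $x\to 1$; by definition this number is $c_{n,k,m}$. As in Lemma~\ref{lem:a}, I partition this set according to $j$, the number of vertices reachable from $1$, setting $X$ to be that set and $Y=[n]\setminus X$, and then refine the partition by recording $|X|=j$, the number $i$ of pairs $(x,y)\in(X\setminus\{1\})\times Y$ with $x<y$, the number $s$ of descents of the subgraph induced by $X$, and the number $r$ of descents of the subgraph induced by $Y$.

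Next I would verify that these data are forced to lie in $\Omega_c$. Since $1$ is reachable from itself we have $1\in X$, so $j\ge 1$. Acyclicity forbids every edge from $X$ into $Y$ and every edge from $X\setminus\{1\}$ to $1$, so all $m$ descents pointing at $1$ must originate in $Y$; hence $n-j=|Y|\ge m$, that is, $j\le n-m$. The $k$ descents split into four disjoint groups --- the $m$ pointing at $1$, the $s$ inside $X$, the $r$ inside $Y$, and the remaining $k-r-s-m$ running from $Y$ to $X\setminus\{1\}$ --- so $r+s\le k-m$. Finally $i\le(j-1)(n-j)$, with equality exactly when every vertex of $X\setminus\{1\}$ is smaller than every vertex of $Y$. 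Thus $(m,j,i,r,s)\in\Omega_c$.

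Then I would count the graphs in each class exactly as in Lemma~\ref{lem:a}. By Lemma~\ref{q} there are $Q_{n-1,j-1,i}$ ways to split the $n-1$ vertices other than $1$ into $X\setminus\{1\}$ and $Y$ realizing the prescribed value of $i$. The subgraph induced by $Y$ may be any acyclic digraph on those $n-j$ vertices with $r$ descents, giving $d_{n-j,r}$ choices, while the subgraph induced by $X$ must have every vertex reachable from $1$ and exactly $s$ descents, giving $t_{j,s}$ choices (here one uses that every path in $G$ starting at $1$ stays inside $X$). There are $\binom{n-j}{m}$ ways to pick the $m$ vertices of $Y$ that send a descent to $1$, and $\binom{i}{k-r-s-m}$ ways to choose which of the $i$ cross-pairs $(x,y)$ with $x<y$ contribute the edge $y\to x$; the remaining $(j-1)(n-j)-i$ increasing cross-edges from $Y$ to $X\setminus\{1\}$ may each independently be present or absent, contributing the factor $2^{(j-1)(n-j)-i}$, and no edge leaves $X$. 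Taking the product of these counts and summing over the admissible $(j,i,r,s)$ gives
\[ c_{n,k,m}=\sum_{\substack{(j,i,r,s)\\ (m,j,i,r,s)\in\Omega_c}} d_{n-j,r}\,t_{j,s}\binom{n-j}{m}\binom{i}{k-r-s-m}\,2^{(j-1)(n-j)-i}\,Q_{n-1,j-1,i}. \]
Finally, multiplying this identity by $m-1$ and summing over $2\le m\le k$ yields the statement of the lemma.

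I do not anticipate a genuine obstacle here; the only point requiring care is the same as in Lemmas~\ref{lem:a} and~\ref{lem:b}, namely keeping straight which edge slots are already determined (those inside $X$, those inside $Y$, and the $m$ chosen edges into $1$), which are free (the increasing cross-edges from $Y$ to $X\setminus\{1\}$), and which are forbidden (every edge out of $X$, and every edge from $X\setminus\{1\}$ to $1$). One should also check that the class indexed by $m$ consists precisely of the graphs with \emph{exactly} $m$ descents at $1$ --- this holds because the extra $k-r-s-m$ chosen descents all point into $X\setminus\{1\}$, never at $1$ --- so that the class has size $c_{n,k,m}$ rather than $\sum_{m'\ge m}c_{n,k,m'}$, and weighting by $m-1$ and summing reproduces the left-hand side directly.
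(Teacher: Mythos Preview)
Your proposal is correct and follows essentially the same argument as the paper's own proof: fix $m$, partition by the set $X$ of vertices reachable from $1$, record the parameters $(j,i,r,s)$, verify they lie in $\Omega_c$, and count using $Q_{n-1,j-1,i}$, $t_{j,s}$, $d_{n-j,r}$, $\binom{n-j}{m}$, $\binom{i}{k-r-s-m}$, and $2^{(j-1)(n-j)-i}$. Your extra remarks --- that acyclicity forces every edge into $1$ to come from $Y$, and that the $k-r-s-m$ remaining descents land in $X\setminus\{1\}$ so the count really is $c_{n,k,m}$ and not a tail sum --- are helpful clarifications but do not change the route.
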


\begin{proof} For a fixed $2 \leq m \leq k$, partition all graphs in $\mathcal{D}_{n,k}$ that have exactly $m$ descents pointing at 1 by the number of vertices, $j$, that are reachable from 1. In this case we have $1 \leq j \leq n-m$ as there must be at least $m$ vertices that are not reachable from 1.  Let $X$ be the set of vertices reachable from 1 and let $Y=[n] \setminus X$. We count the number of graphs satisfying the following conditions:
\begin{itemize}
\item $|X| = j$,
\item the number of pairs of vertices $(x,y) \in (X \setminus \{1\}) \times Y$ where $x < y$ is $i$,
\item the number of descents in the subgraph induced by $X$ is $s$, and
\item the number of descents in the subgraph induced by $Y$ is $r$,
\end{itemize}
where the values $j,i,s$, and $r$ satisfy certain conditions. Since 1 is reachable from itself, clearly $j\geq 1$. Since there are $m$ descents pointing at 1, these $m$ elements are not reachable from 1, and thus must be elements of $Y$. Thus, $j\leq n-m$. As before, there are at most $(j-1)(n-j)$ possible edges which could be descents of the form $y\to x$ where $y \in Y$ and $x \in (X\setminus{1})$. Finally, since there are $m$ descents of the form $y\to 1$ for some $y\in Y$, there are at most $k-m$ descents which occur in the subgraphs induced by $X$ and by $Y$, thus $r+s\leq k-m$. We conclude that 
$(m,j,i,r,s) \in \Omega_c$.

By Lemma \ref{q}, there are $Q_{n-1,j-1,i}$ ways to partition the vertices $[n] \setminus \{1\}$ with these given conditions. Also, there are $t_{j,s}$ and $d_{n-j,r}$ choices for the subgraphs induced by $X$ and $Y$ respectively. Of the $n-j$ vertices in $Y$, exactly $m$ of them must point at 1 (giving the ${n-j \choose m}$ term) and the remaining $k-r-s-m$ descents can be chosen from the $i$ pairs of vertices $(x,y) \in (X \setminus \{1\}) \times Y$ where $x < y$ (which gives the ${i \choose k-r-s-m}$ term). Finally, there are $(j-1)(n-j)-i$ edges from $Y$ to $X$ which are increasing that can also be added without introducing any new descents or cycles. The result follows.
\end{proof}

\bibliographystyle{amsplain}
\bibliography{DAGbib.bib}

\end{document}